\theoremstyle{plain}
\newtheorem{theorem}{Theorem}[]
\newtheorem{corollary}[theorem]{Corollary}
\newtheorem{lemma}[theorem]{Lemma}
\newtheorem{proposition}[theorem]{Proposition}
\theoremstyle{definition}
\newtheorem{example}[theorem]{Example}
\newtheorem{remark}[theorem]{Remark}
\theoremstyle{remark}
\numberwithin{equation}{section}
\email{shafiq@cuiatk.edu.pk, shafiq\_ur\_rahman2@yahoo.com}
\email{rubabgull555@gmail.com}
\email{sehr.amin786@yahoo.com}
\begin{document}
\title[\resizebox{4.5in}{!}{A finiteness condition on quasi-local overrings of a class of pinched domains}]{A finiteness condition on quasi-local overrings of a class of pinched domains}

\author{Shafiq ur Rehman, Sehrish Bibi, and Rubab Gull}

\address{}

\address{(Rehman, Bibi, Gull) COMSATS University Islamabad, Attock campus,
Pakistan.}

\thanks{2010 Mathematics Subject Classification: Primary 13A15, Secondary 13F05.}
\thanks{Key words and phrases: Overring, Localization, Integrally closed, Pr\"{u}fer domain, Dedekind \indent domain, Valuation domain, Pseudo-valuation domain.}

\begin{abstract}
  An integral domain is called {\em Globalized multiplicatively pinched-Dedekind domain $($GMPD domain$)$} if every nonzero non-invertible ideal can be written as  $JP_1\cdots P_k$ with $J$ invertible ideal and $P_1,...,P_k$ distinct ideals which are  maximal among the nonzero non-invertible ideals, cf. \cite{DumII}.
The GMPD domains with only finitely many overrings have been recently studied in \cite{SU}. In this paper we find the exact number of quasi-local overrings of GMPD domains that only finitely many overrings. Also we study the effect of quasi-local overrings on the properties of GMPD domains. Moreover, we consider the structure of the partially ordered set of prime
ideals (ordered under inclusion) in a GMPD domain.
\end{abstract}

\maketitle

%\thanks{The authors gratefully acknowledge presentation of this paper.}
%

\section[Introduction]{Introduction}

A. Jaballah gave necessary and sufficient conditions for the set of overrings of Pr\"{u}fer domains to be finite \cite[Corollary 2.1]{AJ2}. He asked for the exact number of overrings of Pr\"{u}fer domains that have only finitely many overrings and also for the  characterization of domains with finitely many overrings \cite[Question 2.2]{AJ2}. R. Gilmer labeled such domains as FO-domains in \cite{G1}.
Many related results about FO-domains can be found in \cite{G1}, \cite{AJ1}, and \cite{AJ}.\\

 A class of domains, called {Globalized multiplicatively pinched-Dedekind domains $($GMPD domains$)$}, was introduced in \cite{DumII} as an  extension of the class of Dedekind domains and  have been recently studied in \cite {SU} with finiteness condition on the set of its overrings. In this paper we continue to investigate the overring-theoretic properties of GMPD domains and determine the exact number of quasi-local overrings of GMPD domains that only finitely many overrings.
 Further we investigate whether the number of quasi-local overrings affects the properties of GMPD domains. More precisely, if a GMPD domain is given with finitely many quasi-local overrings, then what properties could be characterized by the number of quasi-local overrings? \\

 A short introduction for the notions involved is given here for the reader's convenience. An intermediate ring in the ring extension $A \subseteq B$ is a subring of $B$ that contains $A$.
The set of all intermediate rings in   $A \subseteq B$ is denoted by $[A,B]$. Let $D$ be an integral domain with quotient field $K$. Then
$[D,K]$ denotes the set of all overrings of $D$.  For simplicity we use  $\mathcal{O}(D)$
instead of $[D,K]$. For the set of quasi-local overrings of $D$, we we use the notation $\mathcal{O}_{ql}(D)$. If every prime ideal $P$ in $D$ is strongly prime i.e., whenever $xy \in P$ for some $x,y \in K$ then either $x \in P$ or $y \in P$, then  $D$ is called pseudo-valuation domain (PVD).   Two incomparable valuation domains with the same quotient field are said to be {\em independent} if they have no nonzero common prime ideal. A DVR is a valuation domain with value group $\mathbb{Z}$. A graph that can be drawn in the shape of the letter $Y$ is called
a {\em $Y$-graph} and a graph that  does not contain a $Y$-graph as a subgraph is called {\em $Y$-free}, cf. \cite[Section 2]{AJ1}. If $Spec(D)$ endowed with natural partial
 ordering forms  a tree then  $D$ is called a {\em treed domain},  cf. \cite{G2}. If $Spec(D)$ has no $Y$-graph as a subgraph then $D$ is said to have $Y$-free spectrum.  If each nonzero ideal of $D$ is contained in only finitely many maximal ideals and each nonzero prime ideal is contained in a unique
maximal ideal then $D$
is called {\em h-local}.  A treed domain is h-local if and only if it has $Y$-free spectrum. If $D$ has a unique MNI ideal $Q$ $($by an MNI ideal of
$D$ we mean an ideal of $D$ which is maximal among the non-invertible ideals of $D$$)$ such that every nonzero non-invertible
ideal of $D$ can be factored as $JQ$ for some invertible ideal $J$ then  $D$ is called an MPD domain,  cf. \cite{Dum}. A Dedekind domain is an MPD domain with zero MNI ideal. The quasi-local MPD  domains are: the DVRs, the two-generated pseudo-valuation domains,  the (rank-one) valuation domains with
value group $\mathbb{R}$ and the rank-two strongly discrete valuation domains \cite[Propositions 2.3 and  2.5]{Dum}. If $D$ is $h$-local and all localizations
 of $D$ in maximal ideals are MPD domains then $D$ is called a GMPD domain. Each nonzero non-invertible ideal of $D$ can be
  written as $JQ_1 \cdot Q_2 \cdot Q_3 \cdots Q_n$ for some invertible ideal $J$ and distinct MNI ideals $Q_1,Q_2,Q_3,...,Q_n$ if and only if  $D$ is a GMPD domain, cf.  \cite[Theorem 4]{DumII} and \cite[Theorem 2]{SU1}.
The following implications hold.
$$\begin{array}{ccccccc}
\mbox{Dedekind domain} & \Rightarrow & \mbox{MPD domain}  & \Rightarrow  & \mbox{GMPD domain}\\

\end{array}$$

After summarizing some basic properties of GMPD domains (Proposition \ref{S7}) in section $2$, we prove the following results. For a GMPD domain $D$, $\mathcal{O}(D)$ is finite if and only if  each chain of overrings of $D$ is finite
if and only if  $Spec(D)$ is finite if and only if $Max(D)$ is finite (Proposition \ref{S11}). For a quasi-local GMPD domain $D$,   $\mathcal{O}(D)=\mathcal{O}_{ql}(D)$ (Lemma \ref{1}) . For a GMPD domain $D$,   $\mathcal{O}_{ql}(D)=\cup_{M _i \in Max(D)}\mathcal{O}(D_{M_{i}})$ (Lemma \ref{S2}). If $D$ is a GMPD domain with $|\mathcal{O}(D)|< \infty$,  then
$|\mathcal{O}_{ql}(D)|= \sum_{M_{i} \in Max(D)}|\mathcal{O}(D_{M_{i}})|-|Max(D)|+1$ (Lemma \ref{S4}). If $\mathbb{V}=\{V_{1}, V_{2},...,V_{n}\}$
is the collection of pairwise independent valuation domains  all with quotient field $K$ and if $D=V_{1}\cap V_{2}\cap \cdots \cap V_{n}$ is a
GMPD domain such that for each non-negative integer $n_{i}$,  $1\leq i \leq 3$,  $|\{V\in \mathbb{V} |$ $V$ is $\mathcal{V}_{i}$ $\}|=n_{i}$,
where $\mathcal{V}_{i}$ are valuation domains with value group $G_{i}$,  $G_{1}=\mathbb{R}$, $G_{2}=\mathbb{Z}\times \mathbb{Z}$, and
$G_{3}=\mathbb{Z}$, then $|\mathcal{O}_{ql}(D)|=n_{1}+2n_{2}+n_{3}+1$ (Theorem \ref{S9}). Also $|\mathcal{O}_{ql}(D)|=|Max(D)|+1$ if and
only if no $V_{i}$ has value group $\mathbb{Z}\times\mathbb{Z}$ and $|\mathcal{O}_{ql}(D)|=2|Max(D)|+1$ if and only if all $V_{i}$'s have value group
$\mathbb{Z}\times\mathbb{Z}$ (Corollary \ref{S10}). For any positive integer $n< \infty$, we can construct a GMPD domain having exactly $n$  quasi-local
overrings (Example \ref{9}). If $D$ is GMPD domain with  finite maximal spectrum such that for every non-negative integer
$n_{i}$, $1\leq i\leq 4$, $|\{M \in Max (D)|D_{M}$ is $\mathcal{K}_{i}\}|=n_{i}$, where $\mathcal{K}_{1}=$ two generated PVD but not DVR, $\mathcal{K}_{j}=$
valuation domain with value group $G_{j}, j=2, 3, 4, G_{2}= \mathbb{R}, G_{3}=\mathbb{Z}\times \mathbb{Z}$ and $G_{4}=\mathbb{Z}$,
then $|\mathcal{O}_{ql}(D)|=2(n_{1}+n_{3})+(n_{2}+n_{4})+1$; $D$ is Noetherian if and only if $|\mathcal{O}_{ql}(D)|=2n_{1}+n_{4}+1$;
$D$ is Pr\"{u}fer if and only if $|\mathcal{O}_{ql}(D)|=2n_{3}+(n_{2}+n_{4})+1$; $D$ is Dedekind if and only if $|\mathcal{O}_{ql}(D)|=n_{4}+1$
(Theorem \ref{S5}). If $D$ is an MPD domain with $|Max(D)|=n<\infty$  then $|\mathcal{O}(D)|=2^{n}$ or $3\cdot 2^{n-1}$ and $|\mathcal{O}_{ql}(D)|=n+2$ or $n+1$
(Theorem \ref{ss10}). A GMPD domain $D$ with $|Spec(D)|=n < \infty$   has exactly $\lceil \frac{n}{2} \rceil$ partially ordered sets of  prime ideals
(Theorem \ref{s13}). For a GMPD domain $D$ with $|\mathcal{O}(D)|< \infty$, there exist a Pr\"{u}fer domain $R$ such that $Spec(D) \cong Spec(R)$
as a partially ordered set (Remark \ref{z}).\\

Throughout this paper all rings are (commutative unitary) integral domains. Any unexplained material is standard as in \cite{G} and \cite{Kap}.\\

\section[Main Result]{Main Results}
The first part of this paper deals with overring-theoretic properties of GMPD domains. Some basic facts related to GMPD domains are recalled from \cite{DumII}.

\begin{proposition}\label{S7}
%$($\cite[Theorems 6, 8, 9]{DumII}$, $\cite[Proposition 4, 5]{SU}$)$
$($\cite[Theorems 6, 9]{DumII}$)$
Let $D$ be a GMPD domain. Then

\begin{enumerate}
\item[(a)] $dim(D) \leq 2$.
\item[(b)] Every maximal ideal contains a unique height-one prime ideal.
\item[(c)] $D$ is a treed domain with  $Y$-free spectrum.
\item[(d)] Every overring of $D$ is a GMPD domain.
\item[(e)] The integral closure $D'$ is a Pr\"{u}fer GMPD domain.
\end{enumerate}
\end{proposition}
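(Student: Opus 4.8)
The plan is to prove each item by reducing to the quasi-local situation through the $h$-local hypothesis and then reading off the answer from the explicit classification of quasi-local MPD domains recalled above, namely the DVRs, the two-generated pseudo-valuation domains, the rank-one valuation domains with value group $\mathbb{R}$, and the rank-two strongly discrete valuation domains. The unifying observation I would exploit is that each of these four building blocks has a \emph{linearly ordered} prime spectrum of Krull dimension at most $2$, and that in an $h$-local domain every nonzero prime sits inside a unique maximal ideal; consequently the global spectrum $Spec(D)$ is obtained by gluing the chains $Spec(D_M)$, for $M \in Max(D)$, along the common minimal prime $(0)$. Almost everything then follows from localizing at a maximal ideal and inspecting which of the four types one lands in.

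For part (a) I would use $dim(D) = \sup_{M \in Max(D)} dim(D_M)$; inspecting the four types shows $dim(D_M) \le 2$, with equality attained only by the rank-two strongly discrete valuation domains, so the bound is immediate. Part (b) follows by fixing $M \in Max(D)$, noting that $D_M$ is one of the four types and hence has a totally ordered spectrum with a unique height-one prime, and then transporting this prime back along the standard correspondence between primes of $D_M$ and primes of $D$ contained in $M$; $h$-locality guarantees that this height-one prime is not shared with any other maximal ideal, so it is the unique height-one prime inside $M$. For part (c) I would first check that $D$ is treed: if $Q \supseteq P$ are primes of $D$, then the unique maximal ideal lying over $P$ also lies over $Q$, so $P$ and $Q$ both belong to a single chain $Spec(D_M)$, whence the primes above any given prime are totally ordered. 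Once $D$ is known to be treed, and since $D$ is $h$-local by definition of a GMPD domain, the cited equivalence that a treed domain is $h$-local if and only if it has $Y$-free spectrum yields the $Y$-free conclusion with no further argument.

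Parts (d) and (e) carry the real weight, and I expect (d) to be the main obstacle. Given an overring $T$ of $D$ and a maximal ideal $N$ of $T$, I would set $P = N \cap D$ and observe that $T_N$ is an overring of $D_P$, where $D_P$ is either one of the four quasi-local MPD types (when $P$ is maximal) or a further localization of such a type (when $P$ is height-one and non-maximal, in which case $D_P$ is the rank-one localization of a rank-two valuation domain, i.e.\ a valuation overring). The delicate point is twofold: one must verify that every overring of each building block is again an MPD domain, and simultaneously that $T$ inherits $h$-locality from $D$, so that the finite-character and unique-maximal-over-each-prime conditions survive the passage from $D$ to $T$; this bookkeeping is where the genuine effort lies. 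Granting (d), part (e) is short: $D'$ is an overring of $D$, hence a GMPD domain, and each localization $(D')_N$ is the integral closure of the relevant localization of $D$. Since the integral closure of every quasi-local MPD domain is a valuation domain (a pseudo-valuation domain embeds in its associated valuation domain, while the DVRs and the two valuation types are already integrally closed), every $(D')_N$ is a valuation domain, so $D'$ is Pr\"{u}fer.
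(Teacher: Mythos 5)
The first thing to say is that the paper contains no proof of Proposition~\ref{S7} at all: the statement is imported verbatim from \cite[Theorems 6, 9]{DumII}, and the attribution in the statement is the entire ``proof.'' So there is no internal argument to compare yours against, and your attempt must be judged on its own merits. On that basis, parts (a), (b) and (c) of your outline are sound: the dimension count over the four quasi-local MPD types, the transport of the unique height-one prime along the correspondence between $Spec(D_M)$ and the primes of $D$ inside $M$, and the treed property via the linearly ordered spectra of the $D_M$ all work (two small remarks: in (b) the appeal to $h$-locality is superfluous, since uniqueness of the height-one prime inside $M$ is already internal to the chain $Spec(D_M)$; and in (c) the phrase ``the unique maximal ideal lying over $P$ also lies over $Q$'' is garbled, though the intended argument --- two incomparable primes under a common maximal ideal would contradict the linear ordering of $Spec(D_M)$ --- is correct). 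Part (e) is also fine \emph{granted} (d), using that integral closure commutes with localization and that the integral closure of each building block is a valuation domain.

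The genuine gap is exactly where you locate it: part (d) is, as written, a list of proof obligations rather than a proof. Two distinct claims are left unestablished. First, that every overring of each of the four quasi-local MPD types is again a quasi-local MPD domain; this is true and is essentially what the present paper's Lemma~\ref{1} records via \cite[Definition 2]{DumII} and \cite[Corollary 3.3]{HH} (for the two-generated PVD case see also \cite[Lemma 2]{SU}, which gives the three-overring chain $D \subset D' \subset K$ with $D'$ a DVR), but you neither prove it nor point to a source. Second, and more seriously, that $h$-locality passes to overrings: this is a nontrivial theorem (see Olberding \cite{O}), not ``bookkeeping.'' Your localization framework shows each $T_N$ is a quasi-local MPD domain, but it does not by itself deliver the finite-character half of $h$-locality for $T$, nor that distinct maximal ideals of $T$ share no nonzero prime --- your gluing picture of $Spec(D)$ controls contractions to $D$, not the ideal structure of $T$ itself. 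Since (d) is the load-bearing item (it is also the input to (e)), the proposal as it stands is an accurate and well-organized roadmap, consistent with how \cite{DumII} proceeds, but not a complete proof.
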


\noindent A result analogous to \cite[Corollary 2.1]{AJ2} is attained in the following Proposition under GMPD condition  which is also an improvement of \cite[Proposition 4]{SU}.
\begin{proposition}\label{S11}
For a GMPD domain $D$, the following assertions are equivalent.
\begin{enumerate}
\item[(a)]$\mathcal{O}(D)$ is finite.
\item[(b)] Each chain of overrings of $D$ is finite.
\item[(c)] $Spec(D)$ is finite.
\item[(d)] $Max(D)$ is finite.
\end{enumerate}
\end{proposition}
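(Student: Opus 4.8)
The plan is to prove the cycle (a) $\Rightarrow$ (b) $\Rightarrow$ (c) $\Rightarrow$ (d) $\Rightarrow$ (a), exploiting the structural facts in Proposition \ref{S7} together with the fact that a GMPD domain is $h$-local by definition and that its localizations at maximal ideals are quasi-local MPD domains. The implication (a) $\Rightarrow$ (b) is immediate, since any chain of overrings is a subset of $\mathcal{O}(D)$, and (c) $\Rightarrow$ (d) is immediate from $Max(D) \subseteq Spec(D)$. The two substantive steps are (b) $\Rightarrow$ (c) and (d) $\Rightarrow$ (a).

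For (b) $\Rightarrow$ (c) I would argue contrapositively. First I would record a counting observation: since $\dim(D) \le 2$ (Proposition \ref{S7}(a)), every nonzero non-maximal prime has height one, and since $D$ is $h$-local each such prime lies under a unique maximal ideal, which by Proposition \ref{S7}(b) determines it uniquely. Hence the nonzero non-maximal primes are in bijection with the height-two maximal ideals, giving $|Spec(D)| = 1 + |Max(D)| + \#\{\text{height-two maximals}\} \le 2\,|Max(D)| + 1$. Consequently $Spec(D)$ is infinite only if $Max(D)$ is infinite, so it suffices to produce an infinite chain of overrings when $Max(D)$ is infinite. Here I would pass to the integral closure $D'$, which is a Pr\"ufer GMPD domain by Proposition \ref{S7}(e). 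Because the integral closure of each quasi-local MPD domain $D_{M}$ is a quasi-local valuation domain, exactly one maximal ideal of $D'$ lies over each maximal ideal of $D$, so $Max(D')$ is infinite as well. Applying the Pr\"ufer case \cite[Corollary 2.1]{AJ2} to $D'$ yields an infinite chain of overrings of $D'$; since every overring of $D'$ is an overring of $D$, this is the required infinite chain, contradicting (b).

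For (d) $\Rightarrow$ (a), write $Max(D) = \{M_1, \dots, M_n\}$. The key elementary observation is that for an arbitrary overring $T$ of $D$ one has $T = \bigcap_{i=1}^n T_{M_i}$, where $T_{M_i} := (D \setminus M_i)^{-1} T$: indeed, if $x$ lies in every $T_{M_i}$, choose $s_i \in D \setminus M_i$ with $s_i x \in T$; the ideal $(s_1, \dots, s_n)$ lies in no $M_i$ and hence equals $D$, so writing $1 = \sum_i a_i s_i$ with $a_i \in D$ gives $x = \sum_i a_i (s_i x) \in T$. Thus the assignment $T \mapsto (T_{M_i})_{i=1}^n$ is injective into $\prod_{i=1}^n \mathcal{O}(D_{M_i})$, because $T$ is recovered as the intersection of its image. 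Since each $D_{M_i}$ is a quasi-local MPD domain, the classification of such domains (a DVR, a rank-one valuation domain with value group $\mathbb{R}$, a two-generated PVD, or a rank-two strongly discrete valuation domain) shows that each $\mathcal{O}(D_{M_i})$ has at most three elements, whence $|\mathcal{O}(D)| \le 3^n < \infty$.

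I expect the main obstacle to be the organization rather than any single hard estimate: the delicate points are verifying that the contraction map $Max(D') \to Max(D)$ is a bijection (so that infinitude of maximal ideals transfers to the Pr\"ufer integral closure) and confirming, from the explicit list of quasi-local MPD domains, that each admits only finitely many overrings. Once these are in place, the reconstruction identity $T = \bigcap_i T_{M_i}$ makes the finiteness in (d) $\Rightarrow$ (a) essentially combinatorial, and the reduction to the Pr\"ufer case handles (b) $\Rightarrow$ (c).
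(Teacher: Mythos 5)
Your argument is correct, but it takes a genuinely different route from the paper's, which settles both substantive implications by citation: (b) $\Rightarrow$ (c) is exactly \cite[Corollary 1.6]{G1}, and (d) $\Rightarrow$ (a) follows from the observation $|\mathcal{O}(D_M)|\leq 3$ together with Gilmer's local--global finiteness criterion \cite[Theorem 3.2]{G1}. Your (d) $\Rightarrow$ (a) replaces the second citation by a self-contained gluing argument: the identity $T=\bigcap_{i}T_{M_i}$ makes $T\mapsto (T_{M_i})_i$ injective into $\prod_i\mathcal{O}(D_{M_i})$, giving the explicit bound $|\mathcal{O}(D)|\leq 3^n$; this is sound (the conductor-type set $(T:_D x)$ is indeed an ideal), and the paper's Theorem \ref{ss10} later sharpens such counts in the MPD case. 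Your (b) $\Rightarrow$ (c) is also different: the estimate $|Spec(D)|\leq 2|Max(D)|+1$ is valid by h-locality and Proposition \ref{S7}(a),(b), and the reduction to the Pr\"ufer case via $D'$ works because $(D_M)'$ is quasi-local --- for the only nontrivial case, a non-integrally closed two-generated PVD, this follows from \cite[Lemma 2]{SU} and Proposition \ref{S7}(e) --- so $Max(D')\to Max(D)$ is bijective.

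One caution: what you need from \cite[Corollary 2.1]{AJ2} is an infinite \emph{chain} of overrings, not merely infinitely many overrings; this is legitimate since Jaballah's corollary includes the chain condition among its equivalences (Proposition \ref{S11} is modeled on it), but if you invoke only the set-finiteness statement there is a gap to fill. In fact you can bypass $D'$ and \cite{AJ2} entirely: if $Max(D)$ is infinite, choose distinct maximal ideals $M_1,M_2,\dots$; h-locality forces $D_{M}D_{N}=K$ for distinct maximal ideals, so the localizations are pairwise independent and $S_n=\bigcap_{i\leq n}D_{M_i}$ has exactly $n$ maximal ideals by \cite{B}, whence $S_1\supsetneq S_2\supsetneq\cdots$ is already an infinite chain of overrings of $D$, proving $\neg(\mathrm{d})\Rightarrow\neg(\mathrm{b})$ directly.
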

\begin{proof}
(a)  $\Rightarrow$ (b) and (c) $\Rightarrow$ (d)  are clear. (b) $\Rightarrow$  (c) follows from \cite[Corollary 1.6]{G1}. (d) $\Rightarrow$ (a): As $|\mathcal{O}(D_M)| \leq 3$ for each $M \in Max(D)$, so by \cite[Theorem 3.2]{G1}, $\mathcal{O}(D)$ is finite.
\end{proof}

%\begin{theorem}\label{}
%		Let $D$ be GMPD domain with $|\mathcal{O}(D)|<\infty$ and let $m_{i}$ be the number of height $i$ maximal ideals. Then
%\begin{enumerate}
%			\item[(a)] $|\mathcal{O}(D)|= 2^{\beta}3^{\alpha+ m_2}$, where $\alpha+\beta=m_1$ and $\alpha, \beta  \in \mathbb{Z}^{+}\cup \{0\}$.			
%			\item[(b)] $|\mathcal{O}(D)|=2^{m_1}3^{m_2}$ if and only if $D$ is Pr\"{u}fer.
%\item[(c)] $|\mathcal{O}(D)|=2^{|Max(D)|}$ if and only if $D$ is one-dimensional Pr\"{u}fer domain.
%\end{enumerate}
%\end{theorem}
%\begin{proof}
%
%		(a): Applying \cite[Theorem 10]{SU}, we get that $|\mathcal{O}(D)|= 3^{m_2}\prod_{i=1}^{m_{1}}\lambda_i$, where $\lambda_i=2$ or $3$ for each $i$. Hence $|\mathcal{O}(D)|= 2^{\beta}3^{\alpha+ m_2}$, where $\alpha+\beta=m_1$.
%
%           (b): Since  a  GMPD domain is integrally closed if and only if it is Pr\"{u}fer,  therefore by applying \cite[Theorem 2.8(ii)]{AJ1} we get that    $|\mathcal{O}(D)|= 2^{m_1}3^{m_2}$ if and only if $D$ is Pr\"{u}fer.
%
%           (c): Apply part (b) and the fact that $D$ is one-dimensional if and only if $m_2=0$ and $m_1=|Max(D)|$.
%
%\end{proof}

\noindent Next we focus our attention to study the properties of  GMPD domains based on quasi-local overrings.\\

 We denote by  $\mathcal{O}(D)$  the set of all overrings of a domain $D$ and  by $\mathcal{O}_{ql}(D)$ the set of those overrings of  a domain $D$ which are quasi local. Clearly, $\mathcal{O}_{ql}(D) \subseteq \mathcal{O}(D)$ but equality does not hold in general, even if $D$ is quasi-local. For example,  if $K$ is a field and $X,Y$ are indeterminate over $K$ then  the domain $K+YK(X)[[Y]]$ is quasi-local but its overring $K[X]+YK(X)[[Y]]$ is not quasi-local.  Our first result shows that each overring of a  quasi-local GMPD domain is quasi-local.

\begin{lemma}\label{1}
Let $D$ be a quasi-local GMPD domain. Then  $\mathcal{O}(D)=\mathcal{O}_{ql}(D)$.
\end{lemma}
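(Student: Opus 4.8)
The plan is to reduce to the classification of quasi-local MPD domains recalled in the introduction and then argue case by case. Since $D$ is quasi-local with maximal ideal $M$, we have $D=D_{M}$, so the defining property of a GMPD domain forces $D$ to be a quasi-local MPD domain. By \cite[Propositions 2.3 and 2.5]{Dum}, $D$ is therefore one of: a DVR, a two-generated pseudo-valuation domain, a rank-one valuation domain with value group $\mathbb{R}$, or a rank-two strongly discrete valuation domain. The inclusion $\mathcal{O}_{ql}(D)\subseteq\mathcal{O}(D)$ is automatic, so it suffices to prove that every overring of $D$ is quasi-local, and I would establish this in each of the four cases.

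In three of the four cases $D$ is itself a valuation domain. Here the argument is immediate: every overring of a valuation domain $D$ is again a valuation domain, namely a localization $D_{P}$ at a prime $P$, and in particular it is quasi-local. Thus all overrings lie in $\mathcal{O}_{ql}(D)$ and there is nothing more to check.

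The substantive case is that of a two-generated pseudo-valuation domain. Let $V$ be the valuation domain canonically associated with $D$; then $D$ and $V$ share the maximal ideal $M=(D:V)$, and the two-generated hypothesis means the residue extension $V/M$ over $D/M$ has degree two, so that $D\subset V$ is a minimal ring extension (no ring lies strictly between). I would then show that every overring $R$ with $R\supsetneq D$ contains $V$. Choose $y\in R\setminus D$. If $y\in V$, then $D\subsetneq D[y]\subseteq V$ forces $D[y]=V$ by minimality, so $V\subseteq R$. If $y\notin V$, then since $V$ is a valuation domain $y^{-1}\in M\subseteq D\subseteq R$; for any $v\in V$ we get $vy^{-1}\in MV=M\subseteq R$, whence $v=y\,(vy^{-1})\in R$, and again $V\subseteq R$. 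Consequently every overring of $D$ is either $D$ itself, which is quasi-local because it is a pseudo-valuation domain, or an overring of the valuation domain $V$, which is quasi-local by the previous paragraph. This exhausts all overrings and completes the proof.

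I expect the main obstacle to be precisely this last case: establishing the dichotomy that an overring either is trapped inside $V$ (and hence equals $D$ or $V$) or else absorbs all of $V$. This is where the two-generated (degree-two residue) hypothesis and the valuation arithmetic in $V$ do the real work; the valuation cases and the reduction to the classification are routine by comparison.
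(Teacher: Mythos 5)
Your proof is correct, and it shares the paper's overall skeleton --- reduce to the classification of quasi-local MPD domains from \cite[Propositions 2.3 and 2.5]{Dum} and observe that every overring is quasi-local --- but it executes the one nontrivial case very differently. The paper's entire proof is a citation: each overring of a quasi-local GMPD domain is either a valuation domain or a two-generated PVD, by \cite[Definition 2]{DumII} and \cite[Corollary 3.3]{HH}, and both kinds of ring are quasi-local. You instead derive the structure of the overrings by hand in the PVD case, proving the dichotomy that any overring $R\supsetneq D$ contains the associated valuation domain $V$: if some $y\in R\setminus D$ lies in $V$, minimality of $D\subset V$ gives $D[y]=V\subseteq R$, and if $y\notin V$ then $y^{-1}\in M\subseteq R$ and $v=y\,(vy^{-1})\in R$ for all $v\in V$ since $vy^{-1}\in MV=M$. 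This costs more work but buys more: your argument shows the overrings of a non-integrally closed two-generated PVD are exactly $D$ together with the overrings of $V$, which is precisely the content of \cite[Lemma 2]{SU} that the paper quotes separately later, whereas the paper's citation yields only quasi-locality. Two steps you assert deserve a line each, though neither is a gap: that a two-generated PVD which is not a DVR has $[V/M:D/M]=2$ (note $M$ two-generated over $D$ and $M=MV$ principal in $V$ give $\dim_{D/M}M/M^{2}=\dim_{D/M}V/M\leq 2$), and that degree two forces minimality of $D\subset V$ --- here one should say explicitly that $M$ is a common ideal of $D$ and $V$, so any intermediate ring $R$ satisfies $MR\subseteq MV=M$ and $R/M$ is a $D/M$-subalgebra of the degree-two field extension $V/M$, hence equals $D/M$ or $V/M$.
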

\begin{proof} Each overring of $D$ is either a valuation domain or a 2-generated PVD, cf.  \cite[Definition 2]{DumII} and \cite[Corollary 3.3]{HH}.
\end{proof}

\begin{remark}
 {\em $|\mathcal{O}(D)| < \infty$ if and only if $|\mathcal{O}_{ql}(D)|< \infty$ for each integral domain $D$. Indeed, if $E \in \mathcal{O}(D)$ then  $E=\bigcap \{E_M \mid M \in Max(E)\}$ where $E_M \in \mathcal{O}_{ql}(D)$ for  each   $M \in Max(E)$. Hence, if $|\mathcal{O}_{ql}(D)|< \infty$ then $|\mathcal{O}(D)| < \infty$.}
\end{remark}

\begin{lemma}\label{S2}
For a GMPD domain $D$, $\mathcal{O}_{ql}(D)=\bigcup \{\mathcal{O}(D_{M_{i}}) \mid M _i \in Max(D)\}$.
\end{lemma}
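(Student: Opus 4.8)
The plan is to prove the two inclusions separately, using Lemma \ref{1} for one direction and a routine localization argument for the other, together with the fact from Proposition \ref{S7}(d) that overrings of a GMPD domain are again GMPD domains.

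For the containment $\bigcup\{\mathcal{O}(D_{M_{i}}) \mid M_i \in Max(D)\} \subseteq \mathcal{O}_{ql}(D)$, I would first note that each localization $D_{M_i}$ is a quasi-local overring of $D$, and by Proposition \ref{S7}(d) it is itself a GMPD domain. Lemma \ref{1} then applies to $D_{M_i}$ and forces every one of its overrings to be quasi-local. Since any overring of $D_{M_i}$ is in particular an overring of $D$ (both lie inside the common quotient field $K$, and $D \subseteq D_{M_i}$), every member of $\mathcal{O}(D_{M_i})$ is a quasi-local overring of $D$. Taking the union over all maximal ideals yields the desired inclusion.

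For the reverse containment $\mathcal{O}_{ql}(D) \subseteq \bigcup\{\mathcal{O}(D_{M_{i}})\}$, I would begin with an arbitrary quasi-local overring $E$ of $D$ with maximal ideal $N$, and set $P := N \cap D$. Since $1 \notin N$, the ideal $P$ is a proper prime of $D$, hence contained in some maximal ideal $M_i$. The key step is to check that $D_P \subseteq E$: if $s \in D \setminus P$ then $s \notin N$, and because $E$ is quasi-local with maximal ideal $N$, every element of $E \setminus N$ is a unit, so $s$ is invertible in $E$. Thus $D \setminus P$ consists of units of $E$, and the universal property of localization gives $D_P \subseteq E$. Finally, from $P \subseteq M_i$ we obtain $D \setminus M_i \subseteq D \setminus P$, whence $D_{M_i} \subseteq D_P \subseteq E$, so that $E \in \mathcal{O}(D_{M_i})$ and $E$ belongs to the right-hand union.

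The main obstacle is the verification that $D_P \subseteq E$; this is precisely where the quasi-local hypothesis on $E$ is indispensable, since it is what guarantees that every element of $D$ lying outside the contracted prime $P$ becomes a unit in $E$. The remaining manipulations—the contraction $N \mapsto P$, the passage $D_{M_i} \subseteq D_P$ under the inclusion $P \subseteq M_i$, and the identification of overrings of $D_{M_i}$ with overrings of $D$—are formal, and the quasi-locality needed for the first inclusion is supplied entirely by Lemma \ref{1}.
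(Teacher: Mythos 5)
Your proof is correct and takes essentially the same route as the paper: the forward inclusion by applying Lemma \ref{1} to the quasi-local GMPD domain $D_{M_i}$ (quasi-locality being preserved by Proposition \ref{S7}(d)), and the reverse inclusion by contracting the maximal ideal of $E$ to a prime $P$ of $D$, choosing $M_i \supseteq P$, and concluding $D_{M_i} \subseteq D_P \subseteq E$. Your explicit verification that every element of $D \setminus P$ becomes a unit in $E$ merely spells out the step the paper leaves implicit in writing $D_{M_i} \subseteq D_Q \subseteq E$.
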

\begin{proof}
  Using Lemma \ref{1}, we can easily obtain that $\mathcal{O}(D_{M_i})=\mathcal{O}_{ql}(D_{M_i}) \subseteq \mathcal{O}_{ql}(D)$. Further, suppose that  $(E, M) \in \mathcal{O}_{ql}(D)$. If $Q=M \cap D$, then  $Q \subseteq M_{i}$ for some $M_i \in Max(D)$ and so $D_{M_{i}}\subseteq D_{Q} \subseteq E$. This implies $E \in \mathcal{O}(D_{M_{i}})$.
\end{proof}

\noindent Note that a  domain $D$ with finite maximal spectrum is h-local if and only if $D_MD_N$ equals the quotient field of $D$, for every
two distinct maximal ideals $M$ and $N$ of $D$.
%\begin{proposition}\label{S3}
%Let $D$ be an h-local domain with $|Max(D)|< \infty$. Then $\mathcal{O}(D_{M})\cap \mathcal{O}(D_{N})=\{K\}$ $\forall$
%    $M \neq N \in Max(D)$.
%\end{proposition}
%\begin{proof}
%Contrary suppose that $\mathcal{O}(D_{M})\cap
%\mathcal{O}(D_{N})\supset \{K\}$. Then $\exists$ $E \in
%\mathcal{O}(D_{M})\cap \mathcal{O}(D_{N})$ such that $E \subset K$. Let $0
%\neq P\in Spec(E)$. Then $P\cap D_{M}$ is a non-zero prime ideal of $D_{M}$
%and $P\cap D_{N}$ is a non-zero prime ideal of $D_{N}$, and $P\cap D$ is a
%non-zero prime ideal of $D$. Also $(P\cap D) D_{M}=P\cap D_{M}$ and $(P\cap
%D) D_{N}=P\cap D_{N}$. So $P\cap D$ survives in both $D_{M}$ and $D_{N}$,
%i.e. $P\cap D\subseteq M$ and
%$P\cap D\subseteq N$, a contradiction.
%\end{proof}
\begin{lemma}\label{S4}
Let $D$ be a GMPD domain with $|\mathcal{O}(D)|< \infty$. Then
$$|\mathcal{O}_{ql}(D)|=\sum \limits_{M_{i} \in
 Max(D)}|\mathcal{O}(D_{M_{i}})|-|Max(D)|+1$$
\end{lemma}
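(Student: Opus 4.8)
The plan is to count the cardinality of the union appearing in Lemma~\ref{S2} by showing that the constituent sets $\mathcal{O}(D_{M_i})$ overlap only in the quotient field $K$; once that is established, the stated formula drops out of an elementary disjointness count.

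First I would reduce to the finite setting. Since $|\mathcal{O}(D)| < \infty$, Proposition~\ref{S11} gives that $Max(D)$ is finite, so Lemma~\ref{S2} expresses $\mathcal{O}_{ql}(D) = \bigcup_{M_i \in Max(D)} \mathcal{O}(D_{M_i})$ as a finite union of finite sets. Note that $K$, being the quotient field of $D$ and hence of each localization $D_{M_i}$, lies in every $\mathcal{O}(D_{M_i})$, so it is common to all sets in the union.

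The crucial step is to pin down the pairwise intersections. For distinct maximal ideals $M_i \neq M_j$, take any $E \in \mathcal{O}(D_{M_i}) \cap \mathcal{O}(D_{M_j})$. Then $E$ is an overring containing both $D_{M_i}$ and $D_{M_j}$, so $E \supseteq D_{M_i} D_{M_j}$. Because $D$ is a GMPD domain it is $h$-local, and by the observation recorded just after Lemma~\ref{S2}, a domain with finite maximal spectrum is $h$-local exactly when $D_{M_i} D_{M_j} = K$ for every pair of distinct maximal ideals. Hence $E \supseteq K$, which forces $E = K$, and we conclude $\mathcal{O}(D_{M_i}) \cap \mathcal{O}(D_{M_j}) = \{K\}$ for all $i \neq j$.

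Finally, with every pairwise intersection equal to the single element $\{K\}$ and $K$ shared by all the sets, the union splits as the one element $K$ together with the pairwise disjoint pieces $\mathcal{O}(D_{M_i}) \setminus \{K\}$. Counting then yields
$$|\mathcal{O}_{ql}(D)| = 1 + \sum_{M_i \in Max(D)} \bigl(|\mathcal{O}(D_{M_i})| - 1\bigr) = \sum_{M_i \in Max(D)} |\mathcal{O}(D_{M_i})| - |Max(D)| + 1,$$
which is the desired identity. I expect the only delicate point to be the identification of each pairwise intersection with $\{K\}$: the whole argument rests on $h$-locality translating into $D_{M_i} D_{M_j} = K$, so one must take care to confirm that the GMPD hypothesis genuinely supplies both $h$-locality and a finite maximal spectrum before invoking the product criterion.
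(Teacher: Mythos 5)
Your proposal is correct and follows essentially the same route as the paper: the paper's proof likewise combines Lemma~\ref{S2} with the observation that $h$-locality forces $|\mathcal{O}(D_M)\cap \mathcal{O}(D_N)|=1$ for distinct maximal ideals $M\neq N$, then counts the union. You have merely made explicit the details the paper leaves implicit (finiteness of $Max(D)$ via Proposition~\ref{S11}, the identification of each pairwise intersection with $\{K\}$ via $D_{M_i}D_{M_j}=K$, and the final disjointness count), all of which are sound.
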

\begin{proof}
Since $D$ is h-local, so $|\mathcal{O}(D_{M})\cap \mathcal{O}(D_{N})|=1$ for every
two distinct maximal ideals $M$ and $N$ of $D$. Now apply Lemma \ref{S2}.
\end{proof}

Recall \cite[Section 22]{G} that two incomparable valuation domains with the same quotient field are said to be {\em independent} if they have no non-zero common prime ideal. Equivalently, two valuation domains with the same quotient field are said to be {\em independent} if there exist no non-trivial valuation overring containing the both. More precisely, if $V_{1}$ and $V_{2}$ are valuation domains with the same quotient field $K$, then $V_{1}$ and $V_{2}$ are {\em independent} if and only if $V_{1}V_{2}=K$ if and only if no non-zero prime ideal of $V_{1}\cap V_{2}$ survives in both $V_{1}$ and $V_{2}$. Let $V_{1}, V_{2},...,V_{n}$ be pairwise independent valuation domains all with quotient field $K$. Then $D=V_{1}\cap V_{2}\cap \cdots \cap V_{n}$ is GMPD  if and only if each $V_{i}$ has value group $\mathbb{Z}$, $\mathbb{Z}\times\mathbb{Z}$ or $\mathbb{R}$, cf. \cite[Theorem 6]{SU}. In the next result we count the quasi-local overrings of those GMPD domains which are obtained by intersection of valuation domains.\\

Let $\mathbb{V}=\{V_{1}, V_{2},...,V_{n}\}$ be the collection of pairwise independent valuation domains  all with quotient field $K$ and let $D=V_{1}\cap V_{2}\cap \cdots \cap V_{n}$ be a GMPD domain such that for each non-negative integer $n_{i}$,  $1\leq i \leq 3$,  $|\{V\in \mathbb{V} |$ $V$ is $\mathcal{V}_{i}$ $\}|=n_{i}$, where $\mathcal{V}_{i}$ are valuation domains with value group $G_{i}$,  $G_{1}=\mathbb{R}$, $G_{2}=\mathbb{Z}\times \mathbb{Z}$, and $G_{3}=\mathbb{Z}$.
\begin{theorem}\label{S9}
With notation above $|\mathcal{O}_{ql}(D)|=n_{1}+2n_{2}+n_{3}+1$.
\end{theorem}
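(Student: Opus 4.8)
The plan is to reduce the computation to the counting formula of \lemref{S4} and then evaluate each local factor $|\mathcal{O}(D_{M_i})|$ according to the value group of the corresponding $V_i$. First I would record that $D$ has finite maximal spectrum. Since the $V_i$ are pairwise independent valuation domains sharing the quotient field $K$, the standard theory of intersections of independent valuation domains \cite[Section 22]{G} shows that $D=V_1\cap\cdots\cap V_n$ has exactly $n$ maximal ideals $M_1,\dots,M_n$, with $D_{M_i}=V_i$ for each $i$; in particular $|Max(D)|=n=n_1+n_2+n_3$. Hence by Proposition~\ref{S11} (the implication (d) $\Rightarrow$ (a)) we have $|\mathcal{O}(D)|<\infty$, so \lemref{S4} applies and gives
\[
|\mathcal{O}_{ql}(D)|=\sum_{i=1}^{n}|\mathcal{O}(V_i)|-n+1.
\]

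The remaining step is to count the overrings of a single valuation domain $V$ in terms of its value group. Every overring of a valuation domain $V$ with quotient field $K$ is again a valuation domain of the form $V_P$ for a unique prime ideal $P$ of $V$; thus $|\mathcal{O}(V)|$ equals the number of prime ideals of $V$, which equals the number of convex subgroups of its value group $G$ (equivalently, one more than the rank of $G$). I would then treat the three permitted groups. For $G=\mathbb{R}$ the group is archimedean, so its only convex subgroups are $\{0\}$ and $\mathbb{R}$, giving $|\mathcal{O}(V)|=2$; for $G=\mathbb{Z}$ (a DVR) the same count again gives $|\mathcal{O}(V)|=2$; and for $G=\mathbb{Z}\times\mathbb{Z}$ ordered lexicographically the convex subgroups are $\{0\}$, $\{0\}\times\mathbb{Z}$ and $\mathbb{Z}\times\mathbb{Z}$, so $V$ has rank two and $|\mathcal{O}(V)|=3$.

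Finally I would assemble these counts. Splitting the sum over the three classes, the $n_1$ factors with value group $\mathbb{R}$ contribute $2n_1$, the $n_2$ factors with value group $\mathbb{Z}\times\mathbb{Z}$ contribute $3n_2$, and the $n_3$ factors with value group $\mathbb{Z}$ contribute $2n_3$, so $\sum_{i=1}^{n}|\mathcal{O}(V_i)|=2n_1+3n_2+2n_3$. Substituting into the displayed formula and using $n=n_1+n_2+n_3$ yields
\[
|\mathcal{O}_{ql}(D)|=(2n_1+3n_2+2n_3)-(n_1+n_2+n_3)+1=n_1+2n_2+n_3+1,
\]
as claimed. I expect the only genuine obstacle to be the first step: justifying that the given $V_i$ are exactly the localizations $D_{M_i}$ and that $Max(D)$ therefore has precisely $n$ elements, since everything afterwards is merely bookkeeping of convex subgroups. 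In particular one must check that distinct independent $V_i$ have distinct centers on $D$, so that no two maximal ideals collapse; this is exactly the point where pairwise independence and the $h$-locality of the GMPD domain $D$ are used.
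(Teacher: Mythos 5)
Your proposal is correct and follows essentially the same route as the paper: identify $Max(D)$ with the centers of the $V_i$ so that $D_{M_i}=V_i$ (the paper cites \cite[Theorem 107]{Kap} and \cite[Corollary 2]{B} for this, together with h-locality from \cite[Section 3]{O}), then apply Lemma~\ref{S4} and count overrings of each valuation domain by its rank. Your convex-subgroup computation is just an unwound version of the paper's appeal to the fact that a valuation domain of finite dimension $d$ has $d+1$ overrings, and your explicit finiteness check via Proposition~\ref{S11} is a small, harmless addition the paper leaves implicit.
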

\begin{proof}
As $V_i's$ are pairwise independent, so $D$ is  h-local and B\'{e}zout, cf. \cite[Section 3]{O}.
Let $P_i$ be the center of $V_i$ on $D$. Then $Max(D)=\{P_1,P_2,...,P_n\}$ and $D_{P_i}=V_i$, cf. \cite[Theorem 107]{Kap} or \cite[Corollary 2]{B}. Now apply Lemma \ref{S4} and the fact that  a valuation domain of finite dimension $d$ has $d+1$ overrings.
\end{proof}
\begin{corollary}$\,$ \label{S10}
With notation above;
\begin{enumerate}
\item[(a)] $|\mathcal{O}_{ql}(D)|=n+1$ if and only if no $V_{i}$ has value group $\mathbb{Z}\times\mathbb{Z}$.
\item[(b)] $|\mathcal{O}_{ql}(D)|=2n+1$ if and only if all $V_{i}$'s have value group $\mathbb{Z}\times\mathbb{Z}$.
\end{enumerate}
\end{corollary}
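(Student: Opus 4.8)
The plan is to derive Corollary~\ref{S10} directly from the formula $|\mathcal{O}_{ql}(D)|=n_{1}+2n_{2}+n_{3}+1$ established in Theorem~\ref{S9}, using the fact that the three counts partition the maximal ideals, so that $n=n_{1}+n_{2}+n_{3}$ (equivalently $|Max(D)|=n$). Each part is then a short arithmetic equivalence, and the main work is simply unwinding which value-group distributions force the stated equalities.

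For part (a), the plan is to start from $|\mathcal{O}_{ql}(D)|=n_{1}+2n_{2}+n_{3}+1$ and subtract $n+1=n_{1}+n_{2}+n_{3}+1$. This yields the identity
\begin{equation*}
|\mathcal{O}_{ql}(D)|-(n+1)=n_{2}.
\end{equation*}
Hence $|\mathcal{O}_{ql}(D)|=n+1$ holds if and only if $n_{2}=0$. Since $n_{2}$ is precisely the number of $V_{i}$ with value group $\mathbb{Z}\times\mathbb{Z}$, the condition $n_{2}=0$ is exactly the statement that no $V_{i}$ has value group $\mathbb{Z}\times\mathbb{Z}$, which is the desired equivalence.

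For part (b), I would similarly compute the gap between $|\mathcal{O}_{ql}(D)|$ and $2n+1=2n_{1}+2n_{2}+2n_{3}+1$, obtaining
\begin{equation*}
(2n+1)-|\mathcal{O}_{ql}(D)|=n_{1}+n_{3}.
\end{equation*}
Thus $|\mathcal{O}_{ql}(D)|=2n+1$ if and only if $n_{1}+n_{3}=0$, i.e. $n_{1}=n_{3}=0$ (both being non-negative). Because $n=n_{1}+n_{2}+n_{3}$, this is equivalent to $n_{2}=n$, meaning every $V_{i}$ has value group $\mathbb{Z}\times\mathbb{Z}$, which is exactly the claim.

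There is essentially no hard step here: the only thing to be careful about is recording the partition identity $n=n_{1}+n_{2}+n_{3}$ explicitly, since it is what converts the raw counting formula of Theorem~\ref{S9} into the comparisons with $n+1$ and $2n+1$. Everything else is the observation that non-negative integers sum to zero precisely when each is zero. One might also remark, for completeness, that since $\mathbb{R}$, $\mathbb{Z}\times\mathbb{Z}$, and $\mathbb{Z}$ are the only admissible value groups under the GMPD hypothesis (cf. the discussion preceding Theorem~\ref{S9}), the three cases genuinely exhaust $\mathbb{V}$, so the partition identity is valid.
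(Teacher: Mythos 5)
Your proposal is correct and is exactly the argument the paper intends: the corollary is stated without proof as an immediate consequence of Theorem~\ref{S9}, and your derivation via the partition identity $n=n_{1}+n_{2}+n_{3}$ (valid since $\mathbb{R}$, $\mathbb{Z}\times\mathbb{Z}$, and $\mathbb{Z}$ are the only admissible value groups and are pairwise non-isomorphic) simply makes that implicit arithmetic explicit.
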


  Now for any positive integer $n< \infty$,
we are able to construct a GMPD domain having exactly $n$  quasi-local overrings, as illustrated in the next example. Recall \cite[Section 43]{G} that $D$ is a  Krull domain
if $D=\cap_{P \in X^1(D)}D_P$, this intersection has finite character and $D_P$ is a DVR  for each $P \in X^{1}(D)$,
where $X^1(D)$ is the set of height-one prime ideals of $D$. Clearly a UFD is a Krull domain, cf. \cite[Proposition 43.2]{G}.

\begin{example}\label{9}
{\em Let $D$ be a Krull domain with quotient field $K$,  $\{P_i\}_{i=1}^{n-1}$ be a finite collection of height-one prime ideals of $D$ and
let  $S=D-\cup_{i=1}^{n-1} P_i$. Then $D_S$ is a GMPD domain having exactly $n$ quasi-local overrings.
Indeed, because $\{D_{P_i}\}_{i=1}^{n-1}$  are DVRs with the same quotient field $K$ and $D_S= \cap_{i=1}^{n-1} D_{P_i}$.}
\end{example}

Recall \cite[Corollary 19]{DumII} that for every cardinal numbers $c_i$, $1\leq i\leq 4$,  there exists a GMPD domain $D$ such that the set $\{M \in Max(D)\ |\ D_M$ is $\mathcal{K}_i\}$ has cardinality $c_i$, where $\mathcal{K}_{1}=$ two generated PVD but not DVR, $\mathcal{K}_{j}=$ valuation domain with value group $G_{j}, j=2, 3, 4, G_{2}= \mathbb{R}, G_{3}=\mathbb{Z}\times \mathbb{Z}$ and $G_{4}=\mathbb{Z}$. Recall \cite[Lemma 2]{SU} that a non-integrally closed two-generated PVD $D$ with quotient field $K$ has exactly three overrings $D,$ $D^{'},$ and $K$.

\begin{theorem}\label{S5}
Let $D$ be a GMPD domain with $|Max(D)|< \infty$ such that for every non-negative integer $n_i$, $1\leq i\leq 4$, $\big|\{M \in Max(D) \mid D_M$ is $\mathcal{K}_i\}\big|=n_i$, where $\mathcal{K}_1=$ two-generated PVD but not DVR, $\mathcal{K}_j=$ valuation domain with value group $G_j$, $j=2,3,4$, $G_2=\mathbb{R}$, $G_3=\mathbb{Z}\times \mathbb{Z}$ and $G_4=\mathbb{Z}$. Then
\begin{enumerate}
	\item [(a)] $|\mathcal{O}_{ql}(D)|=2(n_{1}+n_{3})+(n_{2}+n_{4})+1$.
	
	\item [(b)] $D$ is Noetherian if and only if $|\mathcal{O}_{ql}(D)|=2n_{1}+n_{4}+1$.
	
	\item [(c)] $D$ is Pr\"{u}fer if and only if $|\mathcal{O}_{ql}(D)|=2n_{3}+(n_{2}+n_{4})+1$.

    \item [(d)] $D$ is Dedekind if and only if $|\mathcal{O}_{ql}(D)|=n_{4}+1$.

\end{enumerate}
\end{theorem}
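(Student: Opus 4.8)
The plan is to reduce everything to the four possible types of the localizations $D_M$, count their overrings, and feed the result into \lemref{S4}. First I record that, since $D$ is a GMPD domain, each $D_M$ is a quasi-local MPD domain, and by the classification \cite[Propositions 2.3 and 2.5]{Dum} these are exhausted by the types $\mathcal{K}_1,\dots,\mathcal{K}_4$; in particular $n_1+n_2+n_3+n_4=|Max(D)|$. By Proposition~\ref{S11} the hypothesis $|Max(D)|<\infty$ gives $|\mathcal{O}(D)|<\infty$, so \lemref{S4} applies, and it remains to compute $|\mathcal{O}(D_M)|$ for each type. A non-integrally closed two-generated PVD has exactly three overrings by \cite[Lemma 2]{SU}, so $|\mathcal{O}(D_M)|=3$ when $D_M$ is $\mathcal{K}_1$. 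Using that a valuation domain of finite dimension $d$ has $d+1$ overrings (as in the proof of \thmref{S9}), I get $|\mathcal{O}(D_M)|=2$ for $\mathcal{K}_2$ and $\mathcal{K}_4$ (rank one) and $|\mathcal{O}(D_M)|=3$ for $\mathcal{K}_3$ (rank two, value group $\mathbb{Z}\times\mathbb{Z}$). Substituting $\sum_M|\mathcal{O}(D_M)|=3n_1+2n_2+3n_3+2n_4$ and $|Max(D)|=n_1+n_2+n_3+n_4$ into \lemref{S4} yields $|\mathcal{O}_{ql}(D)|=2(n_1+n_3)+(n_2+n_4)+1$, proving (a).

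For (b) the key observation is that Noetherianity is a local condition here: since $Max(D)$ is finite and $D=\bigcap_{M}D_M$, an easy ACC argument (localize an ascending chain of ideals at the finitely many maximal ideals, where it stabilizes, and use $I=\bigcap_M ID_M$) shows that $D$ is Noetherian precisely when every $D_M$ is Noetherian. Among the four types, $\mathcal{K}_4$ is a DVR and the two-generated PVD $\mathcal{K}_1$ is Noetherian (its integral closure is a DVR that is module-finite over it, cf. \cite{Dum}), whereas $\mathcal{K}_2$ and $\mathcal{K}_3$ are valuation domains with non-discrete, respectively rank-two, value groups and hence are not DVRs, so not Noetherian. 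Thus $D$ is Noetherian iff $n_2=n_3=0$, in which case (a) gives $|\mathcal{O}_{ql}(D)|=2n_1+n_4+1$. Conversely, if $|\mathcal{O}_{ql}(D)|=2n_1+n_4+1$ then comparison with (a) forces $2n_3+n_2=0$, whence $n_2=n_3=0$ (the $n_i$ being non-negative) and $D$ is Noetherian.

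For (c) I use the standard fact that $D$ is Pr\"{u}fer iff every $D_M$ is a valuation domain. The types $\mathcal{K}_2,\mathcal{K}_3,\mathcal{K}_4$ are valuation domains, while $\mathcal{K}_1$ is a non-integrally closed PVD, hence not a valuation domain. So $D$ is Pr\"{u}fer iff $n_1=0$, and then (a) gives $|\mathcal{O}_{ql}(D)|=2n_3+(n_2+n_4)+1$; the converse again follows by comparing with (a) and using $n_1\geq 0$. Finally for (d): a domain is Dedekind iff it is both Noetherian and Pr\"{u}fer, so by (b) and (c) this happens iff $n_2=n_3=0$ and $n_1=0$, i.e.\ $n_1=n_2=n_3=0$, and then (a) collapses to $|\mathcal{O}_{ql}(D)|=n_4+1$, with the converse once more read off from (a).

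The main obstacle is the step in (b) identifying exactly which of the four local types are Noetherian --- in particular, confirming that the two-generated PVD $\mathcal{K}_1$ is genuinely Noetherian (not merely a one-dimensional local domain) while the valuation domains $\mathcal{K}_2$ and $\mathcal{K}_3$ are not --- together with the justification that Noetherianity descends from the finitely many localizations to $D$. Once these structural facts are in place, the remaining implications are just a matter of substituting the overring counts into \lemref{S4} and comparing with the formula in (a).
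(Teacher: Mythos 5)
Your proposal is correct and follows essentially the same route as the paper: the paper's (one-line) proof is precisely ``apply Lemma~\ref{S4} together with the overring counts $|\mathcal{O}(D_M)|=3$ for a non-integrally closed two-generated PVD and $d+1$ for a valuation domain of dimension $d$,'' and your parts (b)--(d) just make explicit the standard local characterizations (Noetherian and Pr\"{u}fer are local over a finite maximal spectrum, Dedekind $=$ Noetherian $+$ Pr\"{u}fer) that the paper leaves implicit. One small simplification: $\mathcal{K}_1$ is Noetherian immediately because ``two-generated'' means every ideal is generated by two elements, so no appeal to module-finiteness of the integral closure is needed.
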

\begin{proof}
Apply Lemma \ref{S4} and the facts that a non-integrally closed two-generated PVD   has exactly three overrings and a valuation domain of
finite dimension $d$ has $d+1$ overrings.
\end{proof}

Next we find the exact number of overrings and quasi-local overrings of an MPD domain. Recall that an MPD domain is a GMPD domain with unique MNI ideal,
 cf. \cite{Dum}.

\begin{theorem}\label{ss10}
Let $D$ be an MPD domain with $|Max(D)|=n < \infty$. Then
\begin{enumerate}
	\item [(a)] $|\mathcal{O}(D)|=2^{n}$ or $3\cdot 2^{n-1}$.
\item [(b)] $|\mathcal{O}_{ql}(D)|=n+2$ or $n+1$.
\end{enumerate}
\end{theorem}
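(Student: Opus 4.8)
The plan is to reduce everything to the local behaviour of $D$ at its $n$ maximal ideals, together with the two counting devices already available: Lemma \ref{S4} (which expresses $|\mathcal{O}_{ql}(D)|$ through the local overring counts) and the observation that, for an h-local domain with finitely many maximal ideals and pairwise independent localizations, an overring is recovered from its localizations, so that $|\mathcal{O}(D)|=\prod_{M_i\in Max(D)}|\mathcal{O}(D_{M_i})|$. Since every overring of a quasi-local MPD domain is again quasi-local (Lemma \ref{1}) and the quasi-local MPD domains form the explicit list of \cite[Propositions 2.3 and 2.5]{Dum}, each factor $|\mathcal{O}(D_{M_i})|$ equals $2$ (when $D_{M_i}$ is a DVR or a rank-one valuation domain with value group $\mathbb{R}$) or $3$ (when $D_{M_i}$ is a two-generated PVD that is not a DVR, by \cite[Lemma 2]{SU}, or a rank-two strongly discrete valuation domain). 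Thus the whole theorem follows once I pin down how many of the $n$ factors can exceed $2$.

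The crucial structural step is that an MPD domain has \emph{at most one} maximal ideal $M$ for which $D_M$ fails to be a DVR. Let $Q$ be the unique MNI ideal of $D$. If $Q=0$ then $D$ is Dedekind, so every $D_{M_i}$ is a DVR. Suppose $Q\neq0$; then $Q$ is a nonzero prime, and since $D$ is an h-local treed domain with $Y$-free spectrum (Proposition \ref{S7}(c)), $Q$ is contained in a unique maximal ideal $M_0$. The nonzero MNI ideals of $D$ are precisely the contractions of the (nonzero) MNI ideals of the non-DVR localizations; by h-locality two distinct non-DVR localizations would produce two distinct nonzero MNI ideals of $D$, contradicting the uniqueness of $Q$. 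Hence $D_M$ is a DVR for every $M\neq M_0$, while $D_{M_0}$ is non-DVR and so of one of the three types above. Consequently, among the $n$ local factors, either all equal $2$ (the Dedekind case, or the case that $D_{M_0}$ is a valuation domain with value group $\mathbb{R}$), or exactly one equals $3$.

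It remains to assemble the counts. For (a), the product formula gives $|\mathcal{O}(D)|=2^{n}$ when every factor is $2$ and $|\mathcal{O}(D)|=3\cdot2^{\,n-1}$ when a single factor is $3$. For (b), Lemma \ref{S4} gives $|\mathcal{O}_{ql}(D)|=\big(\sum_i|\mathcal{O}(D_{M_i})|\big)-n+1$, and the sum is $2n$ in the all-$2$ case and $2n+1$ when one factor is $3$, yielding $|\mathcal{O}_{ql}(D)|=n+1$ or $n+2$ respectively.

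I expect the main obstacle to be the two global-to-local passages rather than the arithmetic. First, the correspondence between the nonzero MNI ideals of $D$ and the non-DVR localizations (which forces at most one non-DVR localization) must be made precise from the GMPD factorization theory of \cite{DumII} and \cite{SU1}; the $Y$-free/h-local hypothesis is exactly what prevents a single height-one prime from lying under two maximal ideals and is therefore essential. Second, the product decomposition $|\mathcal{O}(D)|=\prod_i|\mathcal{O}(D_{M_i})|$ used in (a) is the statement that an overring $E$ is determined by the independent local data $(D\setminus M_i)^{-1}E\in\mathcal{O}(D_{M_i})$; this is the same independence of localizations underlying Lemma \ref{S4}, but here it must be applied to \emph{all} overrings rather than only the quasi-local ones, so some care is needed to verify that the resulting intersections are pairwise distinct.
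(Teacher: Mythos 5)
Your proposal is correct and takes essentially the same route as the paper: the paper's proof rests on exactly your two ingredients, citing \cite[Proposition 2.7]{Dum} for the fact that at most one localization fails to be a DVR (which you re-derive from the uniqueness of the MNI ideal and h-locality) and \cite[Theorem 10]{SU1} for the product formula $|\mathcal{O}(D)|=\prod_{i}|\mathcal{O}(D_{M_i})|$ (which you sketch), before applying Lemma \ref{S4} for part (b) just as you do. If anything, you are slightly more careful than the printed proof, which lumps the rank-one value-group-$\mathbb{R}$ case (only two overrings) into the non-DVR case and there asserts $|\mathcal{O}(D)|=3\cdot 2^{n-1}$, whereas that case in fact yields $2^{n}$, as your case split makes explicit.
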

\begin{proof}
 (a): Let $Max(D)=\{M_1, M_2, M_3, ..., M_n\}$. From \cite[Propositin 2.7]{Dum}, there exist a maximal ideal  $M_i$ such that  $D_{M_i}$ is MPD and $D_{M_j}$ is  DVR for each $j \neq i$.
If $D_{M_{i}}$ is not a DVR, then  $D_{M_{i}}$ is either a $2$-generated PVD or a valuation
domain with value group $\mathbb{R}$ or $\mathbb{Z}\times \mathbb{Z}$. Therefore, $|\mathcal{O}(D_{M_{i}})|=2$ or $3$ and $|\mathcal{O}(D_{M_{j}})|=2$ for each $i \neq j$. Hence by  \cite[Theorem 10]{SU1}, we get that  $|\mathcal{O}(D)|=3\cdot2^{n-1}$.  If $D_{M_{i}}$ is a DVR, then again by  \cite[Theorem 10]{SU1}, we get that $|\mathcal{O}(D)|=2^{n}$.

(b):  Apply \cite[Propositin 2.7]{Dum} and Lemma \ref{S4}.
\end{proof}

 At the end we consider the structure of the partially ordered set of prime
ideals (ordered under inclusion) in a GMPD domain. For a GMPD domain $D$ with $|Spec(D)| < \infty$, we consider the question that what
partially ordered sets could be arise as $Spec(D)$? Keeping in view the basic properties of GMPD domain, given in Proposition \ref{S7},  we make first the
 following observations:   \\

\begin{center}
\begin{tabular}{ | c | p{7cm} | }
\hline $|Spec(D)|$ &  $Spec(D)$ for a GMPD domain D   \\ \hline

1 & $\bullet$ \\ \hline
      2 &
\begin{picture}(40,33)(-90,-20)\put(-70,-15){\line(0,1){20}}\put(-72,2){$\bullet$}\put(-72,-17){$\bullet$}\end{picture}
 \\ \hline
     3& \begin{picture}(40,33)(-90,-20)\put(-90,-16){\line(0,1){22}}\put(-92,-8){$\bullet$}\put(-92,-20){$\bullet$}\put(-92,4){$\bullet$}\put(-49,-20){\line(1,3){8}} \put(-49,-20){\line(-1,3){8}}      \put(-51,-21){$\bullet$}\put(-43,3){$\bullet$}\put(-60,3){$\bullet$}\end{picture}

\\ \hline 4&
\begin{picture}(40,33)(-90,-20)
\put(-90,-16){\line(0,1){22}}
\put(-88,-18){\line(1,2){5}}
\put(-85,-9){$\bullet$}
\put(-92,-8){$\bullet$}
\put(-92,-20){$\bullet$}
\put(-92,4){$\bullet$}
\put(-19,-20){\line(1,1){18}} % sus     % stanga
\put(-19,-20){\line(-1,1){18}}      % dreapta
\put(-21,-21){$\bullet$}
\put(-2,-3){$\bullet$}
\put(-40,-3){$\bullet$}
\put(-18,-18){\line(0,1){18}}
\put(-21,-3){$\bullet$}
\end{picture}
 \\ \hline
    5 &
\begin{picture}(40,33)(-90,-20)
\put(-20,-16){\line(0,1){22}}
\put(-19,-18){\line(1,2){5}}
\put(-21,-18){\line(-1,2){5}}
\put(-29,-9){$\bullet$}
\put(-15,-9){$\bullet$}
\put(-22,-8){$\bullet$}
\put(-22,-20){$\bullet$}
\put(-22,4){$\bullet$}
\put(42,-20){\line(1,2){10}}
\put(41,-20){\line(-1,2){10}}
\put(39,-21){$\bullet$}
\put(50,-3){$\bullet$}
\put(39,-3){$\bullet$}
\put(42,-20){\line(1,1){18}}
\put(58,-3){$\bullet$}
\put(42,-18){\line(0,1){17}}
\put(30,-3){$\bullet$}
 \put(-87,-5){\line(0,1){12}}
 \put(-73,-5){\line(0,1){12}}
 \put(-79,-18){\line(1,2){5}}
 \put(-81,-18){\line(-1,2){5}}
 \put(-89,-9){$\bullet$}
 \put(-76,-9){$\bullet$}
 \put(-89,4){$\bullet$}
\put(-75,4){$\bullet$}
 \put(-82,-20){$\bullet$}
\end{picture}
\\ \hline
   6 &
 \begin{picture}(40,33)(-90,-20)
%\put(0,-19){\line(5,3){18}}
%\put(16,-10){$\bullet$}
       \put(-20,-19){\line(5,3){18}}
       \put(-4,-10){$\bullet$}

 \put(42,-20){\line(1,2){10}}
 \put(41,-20){\line(-1,2){10}}
 \put(39,-21){$\bullet$}
 \put(50,-3){$\bullet$}
 \put(39,-3){$\bullet$}
 \put(42,-20){\line(1,1){18}}
 \put(58,-3){$\bullet$}
 \put(42,-18){\line(0,1){17}}
 \put(30,-3){$\bullet$}
  \put(-87,-5){\line(0,1){12}}
  \put(-73,-5){\line(0,1){12}}
\put(-80,-17){\line(0,1){12}}
  \put(-79,-18){\line(1,2){5}}
  \put(-81,-18){\line(-1,2){5}}
\put(42,-20){\line(5,3){28}}
\put(68,-4){$\bullet$}
\put(-82,-9){$\bullet$}
  \put(-89,-9){$\bullet$}
  \put(-76,-9){$\bullet$}
  \put(-89,4){$\bullet$}
\put(-75,4){$\bullet$}
 \put(-82,-20){$\bullet$}
 \put(-20,-16){\line(0,1){22}}
 \put(-19,-18){\line(1,2){5}}
 \put(-21,-18){\line(-1,2){5}}
 \put(-29,-9){$\bullet$}
 \put(-15,-9){$\bullet$}
 \put(-22,-8){$\bullet$}
 \put(-22,-20){$\bullet$}
\put(-22,4){$\bullet$}
 \end{picture}
\\ \hline
\end{tabular}
\end{center}
$\\$
$\\$
After these observations a natural question arises that how many structurally distinct partially ordered sets of prime ideals
a GMPD domain can have? The answer to this question is provided in the following proposition.
\begin{proposition}\label{s13}
Let $D$ be a GMPD domain with $|Spec(D)|=n < \infty$. Then $D$ can have atmost $\lceil \frac{n}{2} \rceil$ partially ordered sets of  prime ideals.
\end{proposition}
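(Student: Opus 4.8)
The plan is to read the shape of $Spec(D)$ directly off the structural constraints collected in Proposition~\ref{S7} and then reduce the problem to an elementary count of lattice points. First I would note that $D$ is a domain, so $(0)$ is the unique minimal prime and is the bottom element of the poset $Spec(D)$. By Proposition~\ref{S7}(c) the domain is treed, meaning the primes below any fixed prime form a chain; thus the Hasse diagram is a rooted tree with root $(0)$ that can branch only upward. The decisive input is that a treed domain is h-local exactly when its spectrum is $Y$-free (recalled in the introduction), and Proposition~\ref{S7}(c) supplies $Y$-freeness; hence every nonzero prime lies in a unique maximal ideal. In tree language this forbids any upward branching at a nonzero prime, so only the root $(0)$ may have more than one element above it, and $Spec(D)$ is a union of chains sharing only the point $(0)$.

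Next I would describe each such chain and package the data. A maximal chain from $(0)$ up to a maximal ideal $M$ has length $\mathrm{ht}(M)\le\dim(D)\le 2$ by Proposition~\ref{S7}(a), so $M$ has height $1$ or $2$. A height-one maximal gives the two-element chain $(0)\subset M$; a height-two maximal $M$ contains, by Proposition~\ref{S7}(b), a unique height-one prime $P$ and so gives the three-element chain $(0)\subset P\subset M$. Since h-locality prevents two maximal ideals from sharing a nonzero prime, these branches meet only in $(0)$. Hence, up to isomorphism of partially ordered sets, $Spec(D)$ is completely determined by the pair $(a,b)$, where $a$ is the number of height-one maximal ideals and $b$ the number of height-two maximal ideals; these are poset invariants, so distinct pairs give non-isomorphic spectra. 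Counting primes gives $n=|Spec(D)|=1+a+2b$, i.e. $a+2b=n-1$ with $a,b\in\mathbb{Z}$ and $a,b\ge 0$. For fixed $n$ the parameter $b$ ranges over $0\le b\le\lfloor(n-1)/2\rfloor$ and then $a=(n-1)-2b$ is forced, so the number of admissible pairs is $\lfloor(n-1)/2\rfloor+1=\lceil n/2\rceil$. As every GMPD domain with $|Spec(D)|=n$ realizes one of these pairs, there are at most $\lceil n/2\rceil$ possibilities for $Spec(D)$, as claimed.

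I expect the only real content to sit in the first paragraph: converting the abstract hypotheses ``treed, $Y$-free, $\dim\le 2$, and a unique height-one prime inside each maximal ideal'' into the concrete normal form that $Spec(D)$ is a bouquet of chains of length at most $2$ glued at the root. Everything after that---matching the isomorphism type with the pair $(a,b)$ and solving $a+2b=n-1$---is routine, and the small-$n$ rows of the table preceding the statement (for instance the two shapes at $n=4$ corresponding to $(a,b)=(3,0)$ and $(1,1)$) give a convenient sanity check on the bookkeeping.
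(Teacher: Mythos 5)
Your proposal is correct and follows essentially the same route as the paper: both use Proposition~\ref{S7} (treed, $Y$-free, $\dim \le 2$, unique height-one prime under each maximal ideal) to see that $Spec(D)$ is a bouquet of chains of length at most $2$ glued at $(0)$, and both reduce the count to the solutions of $a+2b=n-1$, equivalently to the possible values of $|Max(D)|$ between $\lceil\frac{n-1}{2}\rceil$ and $n-1$, yielding $\lceil\frac{n}{2}\rceil$. If anything, your write-up is more explicit than the paper's, which asserts the normal form and the inequality $n-1\le 2|Max(D)|$ without spelling out the derivation you give in your first paragraph.
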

\begin{proof}
 We can assume that $n > 1$. Since  $D$ is a treed domain with
$Y$-free spectrum, so $ n-1 \leq 2 |Max(D)|$ and hence $\frac{n-1}{2} \leq |Max(D)| \leq n-1$. If $n$ is odd, the possibilities for
the number of maximal ideals in each spectrum is $n-1, n-2, ....,\frac{n-1}{2}$ respectively. If $n$ is even, the possibilities for
the number of maximal ideals in each spectrum is $n-1, n-2, ....,\frac{n}{2}$ respectively. Also each prime spectrum has distinct cardinality of maximal ideals.
Hence the total possibilities for the number of distinct partially ordered sets of prime ideals is $\lceil \frac{n}{2} \rceil$.
\end{proof}

 \begin{remark}\label{z} Any two partially ordered sets $U$ and $V$ are said to be isomorphic if there is an order preserving bijection $f: U \rightarrow V$ such that $f^{-1}$ is also order
preserving. By \cite[Theorem 3.1]{L} and Proposition \ref{S7}, we can easily deduce that for a GMPD domain $D$  with $|\mathcal{O}(D)|< \infty$ there exist a Pr\"{u}fer  domain $R$ such that $Spec(D) \cong Spec(R)$ (as a partially ordered set).
\end{remark}

\noindent {\bf Acknowledgements.}
We are thankful to  Higher Education Commission of Pakistan for supporting and facilitating this research.

\end{document}